\documentclass[11pt,a4paper]{article}

\usepackage[margin=2.6cm]{geometry}
\usepackage{amsmath,amssymb,amsthm,mathtools}
\usepackage{array,booktabs}
\usepackage{enumitem}
\IfFileExists{newtxtext.sty}{%
	\IfFileExists{newtxmath.sty}{\usepackage{newtxtext,newtxmath}}{}%
}{}
\usepackage{microtype}
\usepackage[hidelinks]{hyperref}
\newcommand{\R}{\mathbb{R}}

\newcommand{\Sph}{\mathbb{S}}
\DeclareMathOperator{\Rm}{Rm}

\DeclareMathOperator{\tr}{tr}
\DeclareMathOperator{\Div}{div}
\newcommand{\dd}{\,\mathrm d}
\newcommand{\norm}[1]{\left\lVert #1\right\rVert}

\numberwithin{equation}{section}
\theoremstyle{plain}
\newtheorem{theorem}{Theorem}[section]
\newtheorem{lemma}[theorem]{Lemma}
\newtheorem{proposition}[theorem]{Proposition}
\theoremstyle{definition}
\newtheorem{example}[theorem]{Example}

\newenvironment{pf}[1][Proof]{%
	\par\addvspace{0.5\baselineskip}%
	\noindent\textit{#1}\quad\ignorespaces
}{%
	\unskip\nobreak\hfill$\square$\par\addvspace{0.3\baselineskip}%
}

\newenvironment{remark}[1][]{%
	\par\addvspace{0.5\baselineskip}%
	\noindent\textbf{Remark\if\relax\detokenize{#1}\relax\else\space(#1)\fi}\quad\ignorespaces
}{%
	\par\addvspace{0.3\baselineskip}%
}

\hypersetup{
	pdftitle={Total Mean Curvature Gaps for Scalar-Flat Hypersurfaces},
	pdfauthor={Jiuru Zhou},
	hidelinks
}

\begin{document}
	
	%==================== Title / author / dedication ====================
	\begin{center}
		{\Large\bfseries Total Mean Curvature Gaps for Scalar-Flat Hypersurfaces}\par
		
		\vspace{1.0em}
	\end{center}
	\vspace{0.8em}
	
	%==================== Affiliation ====================
	\begin{center}
		Jiuru Zhou\\[0.2em]
		{\itshape Dedicated to Professor Xingwang Xu on his 70th birthday.}
	\end{center}
	
	%==================== Abstract / keywords / classification ====================
	\begin{center}
		\begin{minipage}{0.94\textwidth}
			\small
			\noindent\textbf{Abstract}\quad We prove that a complete connected
			scalar-flat Euclidean hypersurface of dimension $n\geq3$ is an
			affine hyperplane if its normalized mean curvature has sufficiently
			small $L^n$ norm. We also
			compute the total mean curvature energy of a classical
			rotational scalar-flat family to show that finiteness cannot
			replace smallness in dimensions $n\geq3$.
			
			\medskip
			\noindent\textbf{Keywords}\quad Scalar-flat hypersurface $\cdot$ total mean curvature norm $\cdot$ rigidity
			
			\medskip
			\noindent\textbf{Mathematics Subject Classification (2020)}\quad Primary 53C42;\\
			Secondary 53C21 $\cdot$ 53C24
		\end{minipage}
	\end{center}
	\vspace{1.4em}
	\section{Introduction}\label{sec:introduction}
	
	Integral curvature gap theorems relate the geometry of a complete
	submanifold to the size of its curvature. In Euclidean space, the
	curvature identities of Simons~\cite{Simons} and the Sobolev
	inequalities of Michael--Simon~\cite{MichaelSimon} and
	Hoffman--Spruck~\cite{HoffmanSpruck} are basic tools in this study.
	Critical integral estimates for the second fundamental form give
	rigidity for minimal submanifolds and for submanifolds with parallel
	mean curvature; see Ni~\cite{Ni} and Xu--Gu~\cite{XuGu}. Small mean
	curvature alone does not imply flatness, since a complete minimal
	submanifold need not be flat.
	
	Throughout, manifolds are smooth, connected, and without boundary,
	and the metric is induced by the immersion. For
	$F:(M^n,g)\to\R^{n+m}$, let $\mathrm{II}$ be the normal-valued
	second fundamental form and let $H=n^{-1}\tr_g\mathrm{II}$ be the
	normalized mean curvature vector. We use the scalar curvature
	$R=\sum_{i,j}\Rm_{ijij}$ and the full tensor norm of $\Rm$.
	The contracted Gauss equation gives
	\begin{equation}\label{eq:contracted-gauss}
		R=n^2|H|^2-|\mathrm{II}|^2.
	\end{equation}
	Hence
	\begin{equation}\label{eq:critical-equivalence}
		R=0\quad\Longrightarrow\quad
		|\mathrm{II}|=n|H|,\qquad
		\norm{\mathrm{II}}_{L^n(M,g)}=n\norm{H}_{L^n(M,g)}.
	\end{equation}
	Both norms are invariant under ambient dilations. We refer to
	$\norm{H}_{L^n(M,g)}$ as the total mean curvature norm; the corresponding
	integral $\int_M|H|^n\dd v_g$ is its critical energy. Under scalar flatness,
	smallness of this norm is therefore equivalent to smallness of the
	$L^n$ norm of the second fundamental form.
	
	For hypersurfaces, the Newton transformations provide a classical
	approach to constant scalar curvature; see Reilly~\cite{Reilly} and
	Cheng--Yau~\cite{ChengYau}. Under scalar flatness and small critical
	mean-curvature norm, Li--Xu--Zhou~\cite[Theorem~1.2]{LiXuZhou} proved
	the equivalence of local conformal flatness, intrinsic flatness, and
	two equality conditions for the shape tensor. That result does not
	assert that these equivalent conditions hold without an additional
	hypothesis. Their Theorem~1.1 gives a hyperplane theorem under Ricci
	flatness and finite total mean curvature norm.
	Bai--Luo~\cite{BaiLuo} obtained a hyperplane gap under the additional
	differential inequality
	\[
	-P_1(\nabla H,\nabla|H|)
	\leq-\delta|H||\nabla H|^2,\qquad P_1=nHg-A,
	\]
	for some $\delta>0$. Related rigidity results under stability
	assumptions were obtained by Silva Neto~\cite{SilvaNeto} and
	Alencar--do Carmo--Silva Neto~\cite{AlencarDoCarmoNeto}.
	
	Our main result removes the local conformal flatness and differential
	assumptions from these scalar-flat gap results. Its new assertion is
	the gap in dimensions $n\geq3$; the two-dimensional statement is a
	consequence of the classical cylinder theorem.
	
	\begin{theorem}[The hypersurface gap]
		\label{h:thm:main}
		Let $n\geq2$ and let $F:(M^n,g)\to\R^{n+1}$ be a complete
		connected immersion with $R\equiv0$. Assume that
		\begin{enumerate}[label=\textup{(\roman*)},nosep]
			\item $\int_M|H|^q\dd v_g<\infty$ for some $q>0$ if $n=2$;
			\item $\norm{H}_{L^n(M,g)}<\beta_n$ if $n\geq3$, where $\beta_n>0$
			depends only on $n$ and is specified in~\eqref{h:eq:beta}.
		\end{enumerate}
		Then $M$ is isometric to $\R^n$ and $F$ is an isometric embedding
		onto an affine hyperplane. More precisely, there are an isometry
		$\Psi:\R^n\to M$ and a Euclidean rigid motion $Q_0$ of $\R^{n+1}$
		such that
		\begin{equation}\label{h:eq:hyperplaneclassification}
			Q_0\circ F\circ\Psi(x)=(x,0),\qquad x\in\R^n.
		\end{equation}
	\end{theorem}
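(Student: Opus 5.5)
We treat the two cases separately, and the main work is in $n\geq3$.

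\emph{Case $n=2$.} Here $R=2K$ for the Gauss curvature, so $R\equiv0$ means the surface is intrinsically flat. By the classical cylinder theorem (Hartman--Nirenberg, Massey), a complete flat immersed surface in $\R^3$ is a generalized cylinder $\gamma\times\R$ over a complete plane curve $\gamma$. Its mean curvature is $\kappa/2$, where $\kappa$ is the curvature of $\gamma$, and the area element is $ds\,dt$. Therefore
\[
\int_M|H|^q\dd v_g=\infty\cdot\int_\gamma|\kappa/2|^q\,ds
\]
is finite only if $\kappa\equiv0$. Hence $\gamma$ is a line, $M$ is a plane, and the isometry $\Psi$ and the rigid motion $Q_0$ are read off directly.

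\emph{Case $n\geq3$.} Write $A$ for the shape operator and $\sigma_1=nH$, so that $|A|^2=\sigma_1^2$ because $R=0$.

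\textbf{Step 1: a Simons-type inequality.} Since $\sigma_2=0$, the first Newton tensor $P_1=\sigma_1 g-A$ is divergence free. Differentiating $\sigma_1^2=|A|^2$ gives $\sigma_1\nabla\sigma_1=\langle A,\nabla A\rangle$. Combining this with Simons' identity
\[
\tfrac12\Delta|A|^2=|\nabla A|^2+\langle A,\nabla^2\sigma_1\rangle+\sigma_1\tr A^3-|A|^4,
\]
we aim for an inequality of the form
\[
\Div\bigl(P_1(\nabla\,\cdot)\bigr)\text{-type expression}\;\geq\; c_n|\nabla A|^2-C_n|A|^4 .
\]
The alternative route, which I would try first, is to work directly with $u=|A|$. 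Using the improved Kato inequality available when $\sigma_2=0$, derive
\[
u\Delta u\geq \varepsilon_n|\nabla u|^2-C\,u^2\cdot u^2 .
\]
The main obstacle is that the term $\langle A,\nabla^2\sigma_1\rangle$ is not controlled by $|A|^4$, since $\sigma_1$ is not constant. It must be handled by integrating by parts against a cutoff: the term becomes $-\int\langle\Div A,\nabla\sigma_1\rangle\phi^2$ plus boundary terms, and $\Div A=\nabla\sigma_1$ by Codazzi. This leaves $-\int|\nabla\sigma_1|^2\phi^2$, which must be absorbed by $|\nabla A|^2$. That absorption requires an estimate
\[
|\nabla\sigma_1|^2\leq\theta_n|\nabla A|^2,\qquad \theta_n<1 .
\]
Such an estimate is not automatic, because $\nabla\sigma_1=\tr\nabla A$ gives only $\theta=n$. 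This is the point I expect to be hardest. I would try to exploit the constraint $\sigma_1\nabla\sigma_1=\langle A,\nabla A\rangle$ in an orthonormal frame diagonalizing $A$, combined with Codazzi symmetry, to gain the necessary improvement with an $n$-dependent constant. If that fails, I would instead use $P_1$-weighted integrations, where $\Div P_1=0$ makes the bad terms exact divergences.

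\textbf{Step 2: integral estimate.} Multiply the resulting inequality by $u^{n-2}\phi^2$, or use $\phi^2$ against $u^{p}$ with $p$ tuned, and integrate. Absorb the quartic term via the Michael--Simon/Hoffman--Spruck Sobolev inequality
\[
\Bigl(\int f^{\frac{n}{n-1}}\Bigr)^{\frac{n-1}{n}}\leq C_{MS}\int(|\nabla f|+n|H|f),
\]
applied in its $L^2$ form to $f=u^{n/2}\phi$, together with H\"older's inequality:
\[
\int u^2f^2\leq\norm{u}_{L^n}^2\norm{f}_{L^{2n/(n-2)}}^2 .
\]
The mean curvature term in the Sobolev inequality is likewise bounded by $\norm{H}_{L^n}$. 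Choosing $\beta_n$ small in terms of $C_{MS}$, $\varepsilon_n$, and $n$ (this defines \eqref{h:eq:beta}) yields
\[
\int|\nabla(u^{n/2})|^2\phi^2\leq C\int u^n|\nabla\phi|^2 .
\]

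\textbf{Step 3: conclusion.} Take logarithmic or standard cutoffs on intrinsic balls of radius $r\to\infty$; completeness ensures these exist. Finiteness of $\int u^n$ forces the right-hand side to tend to zero, so $u^{n/2}$ is constant. The Sobolev inequality then implies that an infinite-volume complete $M$ (Euclidean volume growth follows from the Sobolev inequality) with $\int u^n<\infty$ has $u\equiv0$. Thus $A\equiv0$, $F$ is totally geodesic, and $F$ maps onto a hyperplane. Finally, completeness together with the local isometry onto the hyperplane gives a covering of the simply connected space $\R^n$, hence an isometry $\Psi$, and composing with a rigid motion $Q_0$ yields \eqref{h:eq:hyperplaneclassification}.
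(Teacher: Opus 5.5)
Your $n=2$ case is fine and is essentially the paper's argument. The gap is Step~1 for $n\geq3$. The estimate $|\nabla\sigma_1|^2\leq\theta_n|\nabla A|^2$ with $\theta_n<1$, i.e.\ an improved Kato inequality for $|A|$ under $\sigma_2=0$, does not hold. Since $|A|=|\sigma_1|$, the difference $|\nabla A|^2-|\nabla\sigma_1|^2$ is exactly the Kato defect $|\nabla A|^2-|\nabla|A||^2$. That defect vanishes whenever $\nabla A=\omega\otimes A$ with $A$ of rank one, and rank one is compatible with $\sigma_2=0$.

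Concretely, take a cylinder $\gamma\times\R^{n-1}$ over a plane curve with nonconstant curvature $\kappa$. There $A=\kappa\,ds\otimes ds$ and $\nabla A=\kappa'\,ds\otimes ds\otimes ds$, so $|\nabla A|=|\nabla\sigma_1|=|\kappa'|$. Your $P_1$ fallback does no better on this cylinder. There $P_1\nabla\sigma_1=(\kappa g-\kappa\,ds\otimes ds)(\kappa'\partial_s)=0$ and $\sigma_1^4-\sigma_1\tr A^3=0$. So the identity $\Div(P_1\nabla\sigma_1)=(|\nabla A|^2-|\nabla\sigma_1|^2)-(\sigma_1^4-\sigma_1\tr A^3)$ reads $0=0$, with any weight and cutoff, while $\nabla|A|\neq0$.

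The other inequalities fail on the same example:
\begin{itemize}
\item $u\Delta u\geq\varepsilon_n|\nabla u|^2-Cu^4$ fails at a sharp local maximum of $\kappa$, where $\kappa\kappa''$ can be as negative as you like.
\item Taking $\kappa=\delta(2+\sin ms)$ with $m$ large defeats any local bound $\int\phi^2|\nabla(u^{n/2})|^2\leq C\int\phi^2u^{n+2}+C\int u^n|\nabla\phi|^2$.
\end{itemize}
So Step~2 has nothing to start from. The Simons identity is blind to bending in one direction, and $A\equiv0$ cannot come out of a single analytic step.

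The missing idea is to estimate the curvature tensor rather than $|A|$. The defect $D=|\nabla A|^2-|\nabla\sigma_1|^2$ and the reaction term $E=\sigma_1^4-\sigma_1\tr A^3$ both vanish in this flat rank-one situation. The paper shows two things:
\begin{itemize}
\item $E$ is comparable to $|\Rm|^2$.
\item Using Codazzi for $A/\sigma_1$, one has $|\nabla(|A|^{(n-4)/2}\Rm)|^2\leq4(n^2+8)|A|^{n-2}D$.
\end{itemize}
It then integrates the divergence identity against $\phi^2|A|^{n-2}$; the nonnegative form $\sigma_1P_1$ absorbs the cutoff term. Michael--Simon plus H\"older, applied to $|A|^{n-2}|\Rm|^2=|A|^2\bigl(|A|^{(n-4)/2}|\Rm|\bigr)^2$, then gives $\Rm\equiv0$. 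That is intrinsic flatness, not $A\equiv0$.

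The remaining one-directional bending must then be excluded extrinsically, for $n\geq3$ as well:
\begin{itemize}
\item Hartman--Nirenberg on the universal cover gives $F\circ\pi(s,y)=(\gamma(s),y)$.
\item A strip $I\times\R^{n-1}$ about a point with $\kappa\neq0$ injects into $M$.
\item The integral of $|H|^n$ over that strip is infinite, contradicting $\int_M|H|^n<\infty$.
\end{itemize}
Your Step~3 would be fine if Step~2 held. As it stands, the proposal lacks both the curvature-tensor estimate and this extrinsic step.
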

	
	\begin{remark}
		For $n\geq3$, fix any admissible dimensional constant $C_n$
		in the Michael--Simon inequality~\eqref{h:eq:ms}. An explicit
		threshold in terms of this choice is
		\begin{equation}\label{h:eq:beta}
			\beta_n=\frac{n-2}{8n(n-1)C_n\sqrt{2n(n^2+8)}}.
		\end{equation}
		This constant is not claimed to be sharp.
	\end{remark}
	
	\noindent\emph{Strategy of the proof.} Write
	\[
	D=|\nabla A|^2-n^2|\nabla H|^2,\qquad
	E=n^4H^4-nH\tr(A^3).
	\]
	Scalar flatness gives $|A|=n|H|$, $D\geq0$, and comparability of
	$E$ with $|\Rm|^2$ (Lemma~\ref{h:lem:algebra}). The scalar-flat
	Simons identity, recalled from~\cite[Lemma~2.2]{LiXuZhou}, becomes
	\[
	\Div\bigl(n(nHg-A)\nabla H\bigr)=D-E.
	\]
	The key additional estimate in Lemmas~\ref{h:lem:codazzi}
	and~\ref{h:prop:weightedcurvature} controls
	$|\nabla(|A|^{(n-4)/2}\Rm)|^2$ by $|A|^{n-2}D$.
	Integrating the divergence identity against $\phi^2|A|^{n-2}$
	yields a weighted energy inequality. The Michael--Simon Sobolev
	inequality, in the form used in~\cite[Lemma~2.5]{LiXuZhou}, and
	H\"older's inequality then permit absorption of the curvature term
	when $\norm{H}_{L^n}$ is small. Intrinsic cutoffs give $\Rm=0$.
	Finally, the Hartman--Nirenberg cylinder theorem~\cite{HartmanNirenberg}
	and integrability of $H$ exclude all bending and identify the
	immersion globally with an affine hyperplane.
	
	In the terminology of~\cite[Theorem~1.2]{LiXuZhou}, all the equivalent
	conditions there are thus satisfied under scalar flatness and a
	sufficiently small total mean curvature norm. No further
	differential assumption as in~\cite{BaiLuo} is needed. The classical
	rotational family in Example~\ref{prop:finite-test} has finite
	positive critical energy, so finiteness cannot replace smallness
	for $n\geq3$.
	
	Section~\ref{sec:preliminaries} establishes the pointwise estimates.
	Sections~\ref{sec:gap} and~\ref{sec:classification} prove
	Theorem~\ref{h:thm:main}. Section~\ref{sec:scalar-necessity} tests its
	hypotheses, and Section~\ref{sec:questions} records two further questions.
	
	\section{Sobolev control and scalar-flat hypersurface identities}
	\label{sec:preliminaries}
	
	This section and Section~\ref{sec:gap} assume
	$n\geq3$. For a hypersurface, choose a local unit normal $\nu$ and identify its
	second fundamental form with the symmetric tensor $A$ given by
	$\mathrm{II}=A\nu$. In the hypersurface calculations, the same symbol
	$H$ denotes the normalized scalar mean curvature:
	\begin{equation}\label{h:eq:trace}
		H=\frac1n\tr_g A.
	\end{equation}
	The Gauss equation gives
	\begin{equation}\label{h:eq:gauss}
		\Rm_{ijkl}=A_{ik}A_{jl}-A_{il}A_{jk},
		\qquad R=n^2H^2-|A|^2.
	\end{equation}
	
	\begin{remark}[Orientation]
		If $M$ is not orientable, $A$ and $H$ are defined only up to the simultaneous
		change $(A,H)\mapsto(-A,-H)$ coming from the choice of the local unit normal.
		Every quantity used below is invariant under this change: $|A|$, $|H|$,
		$nH(nHg-A)$, $nH\tr(A^3)$, $|\nabla A|^2-n^2|\nabla H|^2$, the form
		$(nHg-A)(\nabla H,\cdot)$, and hence the vector field $n(nHg-A)\nabla H$.
		Thus all identities below are identities between globally defined objects on
		$M$, and the integrations by parts are legitimate.
	\end{remark}
	
	\subsection{The critical Sobolev inequality}
	
	Fix a dimensional constant $C_n>0$ in the Michael--Simon
	inequality~\cite{MichaelSimon}; see also~\cite{HoffmanSpruck}:
	\begin{equation}\label{h:eq:ms}
		\left(\int_M\eta^{n/(n-1)}\right)^{(n-1)/n}
		\leq C_n\left(\int_M|\nabla\eta|+n\int_M|H|\eta\right)
	\end{equation}
	for every nonnegative $\eta\in C_0^1(M)$.
	
	If
	\begin{equation}\label{h:eq:sobolevcondition}
		nC_n\norm{H}_{L^n(M)}<1,
	\end{equation}
	then, as shown below (cf.~Lemma~2.5 of Li, Xu, and Zhou~\cite{LiXuZhou}), we have
	\begin{equation}\label{h:eq:sobolev}
		\left(\int_M|\varphi|^{2n/(n-2)}\right)^{(n-2)/n}
		\leq C_S\int_M|\nabla\varphi|^2,
		\qquad \varphi\in C_0^1(M),
	\end{equation}
	where
	\[
	C_S=
	\left(\frac{2(n-1)C_n}
	{(n-2)(1-nC_n\norm{H}_{L^n(M)})}\right)^2.
	\]
	In particular,
	\begin{equation}\label{h:eq:sobolevbound}
		\norm{H}_{L^n(M)}<\frac1{2nC_n}
		\quad\Longrightarrow\quad
		C_S\leq\frac{16(n-1)^2C_n^2}{(n-2)^2}.
	\end{equation}

	\subsection{Algebraic consequences of scalar flatness}
	
	If $R=0$, then
	\begin{equation}\label{h:eq:scalarflat}
		|A|^2=n^2H^2.
	\end{equation}
	In particular, $H=0$ if and only if $A=0$, and the Gauss equation
	implies $\Rm=0$ at every such point.
	
	\begin{lemma}\label{h:lem:algebra}
		On a scalar-flat Euclidean hypersurface,
		\begin{equation}\label{h:eq:newtonbounds}
			0\leq nH(nHg-A)
			\leq\frac{2(n-1)}n|A|^2g.
		\end{equation} Moreover,
		\begin{align}
			\frac4n\bigl(n^4H^4-nH\tr(A^3)\bigr)
			&\leq|\Rm|^2
			\leq4\bigl(n^4H^4-nH\tr(A^3)\bigr),\label{h:eq:algebra}\\
			|\Rm|^2&\leq2|A|^4.\label{h:eq:gaussnorm}
		\end{align}
	\end{lemma}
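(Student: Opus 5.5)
The plan is to work pointwise in an orthonormal frame diagonalizing $A$, with principal curvatures $\lambda_1,\dots,\lambda_n$. Put
\[
S=\sum_j\lambda_j=nH,\qquad T_i=S-\lambda_i,\qquad Q_i=\sum_{j\neq i}\lambda_j^2=|A|^2-\lambda_i^2 .
\]
Scalar flatness~\eqref{h:eq:scalarflat} reads $\sum_j\lambda_j^2=S^2$. All three claims then reduce to elementary inequalities among these quantities. The tensors $nH(nHg-A)$ and $g$ are simultaneously diagonal in this frame, and the eigenvalues of $nH(nHg-A)$ are $ST_i=S^2-S\lambda_i$.

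\emph{Step 1: the Newton bounds~\eqref{h:eq:newtonbounds}.} Cauchy--Schwarz gives $T_i^2\le(n-1)Q_i$. Combined with $S^2=\lambda_i^2+Q_i$, this yields
\[
\lambda_i^2+\frac{(S-\lambda_i)^2}{n-1}\le S^2,
\quad\text{that is,}\quad
n\lambda_i^2-2S\lambda_i-(n-2)S^2\le0 .
\]
The roots of this quadratic in $\lambda_i$ are $S$ and $-(n-2)S/n$. Multiplying through by $S$ (treating the two signs of $S$ together), we get
\[
-\frac{n-2}{n}S^2\le S\lambda_i\le S^2 .
\]
Therefore $0\le ST_i\le\frac{2(n-1)}{n}S^2=\frac{2(n-1)}{n}|A|^2$, which is~\eqref{h:eq:newtonbounds}.

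\emph{Step 2: Gauss norm.} From~\eqref{h:eq:gauss}, expanding the square and contracting gives
\[
|\Rm|^2=2|A|^4-2\tr(A^4)=2\sum_i\lambda_i^2Q_i .
\]
Since $\tr(A^4)\ge0$, the bound~\eqref{h:eq:gaussnorm} is immediate.

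\emph{Step 3: comparison~\eqref{h:eq:algebra}.} The key observation is that, using $\sum\lambda_i^2=S^2$,
\[
E:=n^4H^4-nH\tr(A^3)=S^4-S\sum_i\lambda_i^3=\sum_i\lambda_i^2(S^2-S\lambda_i)=\sum_i\lambda_i^2\,ST_i .
\]
Expanding $S^2=(\lambda_i+T_i)^2=\lambda_i^2+Q_i$ gives $2\lambda_iT_i=Q_i-T_i^2$. Hence
\[
ST_i=\lambda_iT_i+T_i^2=\tfrac12(Q_i+T_i^2).
\]
Using $0\le T_i^2\le(n-1)Q_i$, this gives $\tfrac12Q_i\le ST_i\le\tfrac n2Q_i$. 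Multiplying by $\lambda_i^2\ge0$ and summing, Step 2 yields
\[
\tfrac14|\Rm|^2\le E\le\tfrac n4|\Rm|^2,
\]
which is exactly~\eqref{h:eq:algebra}.

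The only nontrivial point is discovering the identity $ST_i=\frac12(Q_i+T_i^2)$ and the rewriting of $E$ as $\sum\lambda_i^2ST_i$. Once these are found, everything reduces to Cauchy--Schwarz. The sharp constant $2(n-1)/n$ in Step 1 requires the quadratic-root argument rather than the cruder bound $ST_i\le\frac n2Q_i$.
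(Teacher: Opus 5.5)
Your argument is correct and is essentially the paper's proof. It uses the same diagonalization and the same Cauchy--Schwarz bound on the other $n-1$ principal curvatures, giving $-\frac{n-2}{n}S^2\le S\lambda_i\le S^2$, and it makes the same termwise comparison of $E=\sum_i\lambda_i^2\,ST_i$ with $|\Rm|^2=2\sum_i\lambda_i^2Q_i$. The only difference is cosmetic: the paper divides by $nH$ and bounds the factor $1+\lambda_i/(nH)$ in $Q_i=ST_i\,(1+\lambda_i/(nH))$ between $2/n$ and $2$, whereas your identity $ST_i=\frac12(Q_i+T_i^2)$ gives the same bound without division, so points with $H=0$ need no separate case.
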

	
	\begin{pf}
		At a point where $H\neq0$, diagonalize $A$ and denote its principal
		curvatures by $\lambda_1,\ldots,\lambda_n$. The trace identity and
		\eqref{h:eq:scalarflat} give
		\[
		\sum_{i=1}^n\frac{\lambda_i}{nH}=1,\qquad
		\sum_{i=1}^n\left(\frac{\lambda_i}{nH}\right)^2=1.
		\]
		For each $i$, the Cauchy--Schwarz inequality applied to the other
		$n-1$ terms yields
		\[
		\left(\frac{\lambda_i}{nH}\right)^2+
		\frac{1}{n-1}\left(1-\frac{\lambda_i}{nH}\right)^2\leq1.
		\]
		Solving this gives
		\begin{equation}\label{h:eq:principalrange}
			-\frac{n-2}{n}\leq\frac{\lambda_i}{nH}\leq1.
		\end{equation}
		The eigenvalues of $nH(nHg-A)$ are
		$|A|^2(1-\lambda_i/(nH))$. Thus~\eqref{h:eq:principalrange} proves
		\eqref{h:eq:newtonbounds}.
		
		The same trace and norm identities give
		\[
		n^4H^4-nH\tr(A^3)
		=n^4H^4\sum_{i=1}^n
		\left(\frac{\lambda_i}{nH}\right)^2
		\left(1-\frac{\lambda_i}{nH}\right)\geq0.
		\]
		In addition,
		\begin{align*}
			|\Rm|^2
			&=2\bigl(|A|^4-\tr(A^4)\bigr)\\
			&=2n^4H^4\sum_{i=1}^n
			\left(\frac{\lambda_i}{nH}\right)^2
			\left(1-\frac{\lambda_i}{nH}\right)
			\left(1+\frac{\lambda_i}{nH}\right).
		\end{align*}
		By~\eqref{h:eq:principalrange}, the last factor lies between $2/n$
		and $2$. This proves~\eqref{h:eq:algebra}. The first line also gives
		\eqref{h:eq:gaussnorm}. At $H=0$, both $A$ and $\Rm$ vanish, so all
		the inequalities hold there also.
	\end{pf}
	
	\subsection{Codazzi estimates for the curvature tensor}
	
	\begin{lemma}[The Codazzi defect]\label{h:lem:codazzi}
		On the set where $H\neq0$,
		\begin{align}
			|\nabla A|^2-n^2|\nabla H|^2
			&=n^2H^2\left|\nabla\left(\frac{A}{nH}\right)\right|^2\geq0,
			\label{h:eq:energydefect}\\
			n^2|\nabla H|^2\left|\frac{\Rm}{n^2H^2}\right|^2
			&\leq8\bigl(|\nabla A|^2-n^2|\nabla H|^2\bigr),
			\label{h:eq:codazziestimates}\\
			\left|\nabla\left(\frac{\Rm}{n^2H^2}\right)\right|
			&\leq4\left|\nabla\left(\frac{A}{nH}\right)\right|.
			\label{h:eq:normalizedgauss}
		\end{align}
		The difference $|\nabla A|^2-n^2|\nabla H|^2$ is zero wherever
		$H=0$.
	\end{lemma}
	
	\begin{pf}
		Differentiating~\eqref{h:eq:scalarflat} in the direction of a local
		orthonormal frame vector gives
		\[
		\sum_{i,j}A_{ij}\nabla_kA_{ij}=n^2H\nabla_kH.
		\]
		Consequently,
		\begin{align*}
			n^2H^2\left|\nabla\left(\frac{A}{nH}\right)\right|^2
			&=\sum_{i,j,k}
			\left|\nabla_kA_{ij}-\frac{A_{ij}}H\nabla_kH\right|^2\\
			&=|\nabla A|^2-2n^2|\nabla H|^2
			+\frac{|A|^2}{H^2}|\nabla H|^2\\
			&=|\nabla A|^2-n^2|\nabla H|^2.
		\end{align*}
		This proves~\eqref{h:eq:energydefect}.
		
		Next, Codazzi gives $\nabla_kA_{ij}=\nabla_jA_{ik}$ and hence
		\[
		\frac{A_{ij}}{nH}\,n\nabla_kH
		-\frac{A_{ik}}{nH}\,n\nabla_jH
		=nH\left[
		\nabla_j\left(\frac{A_{ik}}{nH}\right)
		-\nabla_k\left(\frac{A_{ij}}{nH}\right)\right].
		\]
		Squaring and summing over $i,j,k$, and using
		$|A/(nH)|^2=1$, yields
		\begin{equation}\label{h:eq:codazzi}
			2\left(n^2|\nabla H|^2-
			\left|\frac{A(\nabla H)}H\right|^2\right)
			\leq4\bigl(|\nabla A|^2-n^2|\nabla H|^2\bigr).
		\end{equation}
		Here $A(\nabla H)$ denotes the action of the shape operator on
		$\nabla H$. To make the spectral estimate explicit, put
		$b_i=\lambda_i/(nH)$ in a principal frame and
		$\mu=\max_i b_i^2$. Since $\sum_i b_i^2=1$, we have
		$0\leq\mu\leq1$ and $\sum_i b_i^4\geq\mu^2$. Therefore
		\begin{align*}
			\left|\frac{\Rm}{n^2H^2}\right|^2
			&=2\left(1-\sum_i b_i^4\right)\\
			&\leq2(1-\mu^2)
			=2(1-\mu)(1+\mu)\leq4(1-\mu).
		\end{align*}
		The same spectral bound gives
		\[
		n^2|\nabla H|^2-\left|\frac{A(\nabla H)}H\right|^2
		\geq n^2(1-\mu)|\nabla H|^2.
		\]
		Combining these inequalities with~\eqref{h:eq:codazzi} yields
		\begin{align*}
			n^2|\nabla H|^2\left|\frac{\Rm}{n^2H^2}\right|^2
			&\leq4n^2(1-\mu)|\nabla H|^2\\
			&\leq8\bigl(|\nabla A|^2-n^2|\nabla H|^2\bigr),
		\end{align*}
		which is~\eqref{h:eq:codazziestimates}. No division by $1-\mu$
		is used; the argument includes the rank-one case $\mu=1$.
		
		Finally, the components of $\Rm/(n^2H^2)$ are
		\[
		\frac{A_{ik}}{nH}\frac{A_{jl}}{nH}
		-\frac{A_{il}}{nH}\frac{A_{jk}}{nH}.
		\]
		Differentiation produces four products. Since $|A/(nH)|=1$, their
		tensor norms give~\eqref{h:eq:normalizedgauss}.
		
		At a point with $H=0$, we also have $A=0$. Evaluate the Laplacian of
		$|A|^2=n^2H^2$ at that point:
		\[
		2|\nabla A|^2=2n^2|\nabla H|^2.
		\]
		Thus the claimed difference is zero on the zero set.
	\end{pf}
	
	\subsection{The weighted curvature at its zero set}
	
	Whenever $A=0$, we define the expression $|A|^{(n-4)/2}\Rm$ to be
	zero. This convention is used throughout the rest of the proof. It is
	necessary only to interpret the apparently negative power when $n=3$;
	the following lemma proves that the extended tensor is regular.
	
	\begin{lemma}[Regularity of the weighted curvature]
		\label{h:prop:weightedcurvature}
		The tensor $|A|^{(n-4)/2}\Rm$, extended as above, is of class $C^1$.
		It satisfies
		\begin{align}
			\bigl||A|^{(n-4)/2}\Rm\bigr|^2
			&\leq2|A|^n,\label{h:eq:weightedtensor}\\
			\left|\nabla\bigl(|A|^{(n-4)/2}\Rm\bigr)\right|^2
			&\leq4(n^2+8)|A|^{n-2}
			\bigl(|\nabla A|^2-n^2|\nabla H|^2\bigr).
			\label{h:eq:weightedgradient}
		\end{align}
		Its norm $|A|^{(n-4)/2}|\Rm|$ is locally Lipschitz and satisfies
		the weak Kato inequality.
	\end{lemma}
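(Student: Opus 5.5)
On the open set where $H\neq0$, I will write the weighted tensor in normalized form. Since $|A|=n|H|$ by \eqref{h:eq:scalarflat},
\[
|A|^{(n-4)/2}\Rm=|A|^{n/2}\,T,\qquad T=\frac{\Rm}{n^2H^2},
\]
where $T$ is a smooth tensor on $\{H\neq0\}$ whose entries are quadratic in the unit-norm tensor $B=A/(nH)$. The pointwise bound \eqref{h:eq:weightedtensor} then follows directly from $|T|^2=2(1-\sum_ib_i^4)\leq2$, equivalently from \eqref{h:eq:gaussnorm}.

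For the gradient I will use the product rule,
\[
\nabla\bigl(|A|^{n/2}T\bigr)=\tfrac n2|A|^{n/2-1}\nabla|A|\otimes T+|A|^{n/2}\nabla T,
\]
together with $|\nabla|A||=n|\nabla H|$. The first term has squared norm $\tfrac{n^2}{4}|A|^{n-2}\,n^2|\nabla H|^2|T|^2$, and by \eqref{h:eq:codazziestimates} this is at most $2n^2|A|^{n-2}D$, where $D=|\nabla A|^2-n^2|\nabla H|^2$. For the second term I will combine \eqref{h:eq:normalizedgauss} with \eqref{h:eq:energydefect} to get
\[
|A|^n|\nabla T|^2\leq16|A|^{n}|\nabla B|^2=16|A|^{n-2}D.
\]
Using $(a+b)^2\leq2a^2+2b^2$ gives $|\nabla(\cdot)|^2\leq(4n^2+32)|A|^{n-2}D=4(n^2+8)|A|^{n-2}D$, which is exactly \eqref{h:eq:weightedgradient} on $\{H\neq0\}$.

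The main obstacle is regularity across the zero set $Z=\{A=0\}$, especially for $n=3$, where the weight $|A|^{-1/2}$ is singular. For $n\geq4$ the tensor $|A|^{(n-4)/2}\Rm$ is a product of $|A|^{(n-4)/2}$ and a quadratic in $A$. It is therefore $O(|A|^{n/2})$ with derivative $O(|A|^{n/2-1}|\nabla A|)$, and since $n/2-1\geq1$ it is $C^1$ with vanishing differential on $Z$. This can also be checked from $\Rm=O(|A|^2)$ and the fact that $|A|$ is Lipschitz.

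In general I will work at a point $p\in Z$. By \eqref{h:eq:weightedtensor} the tensor is bounded by $\sqrt2|A|^{n/2}$. Since $|A|$ is locally Lipschitz, $|A|(x)\leq Cd(x,p)$ near $p$, so the tensor is $O(d^{n/2})=o(d)$ because $n\geq3$. Hence it is differentiable at $p$ with zero differential. For continuity of the differential, the bound just proved on $\{H\neq0\}$ gives $|\nabla(\cdot)|^2\leq C|A|^{n-2}|\nabla A|^2$, since $D\leq|\nabla A|^2$, and this tends to $0$ as $x\to p$ because $n-2\geq1$. Off $Z$ the tensor is smooth, and points in the interior of $Z$ are trivial. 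So the extension is $C^1$ with $\nabla=0$ on $Z$, and \eqref{h:eq:weightedgradient} holds there as well, since both sides vanish by Lemma~\ref{h:lem:codazzi}.

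Finally, I will handle the norm. The function $u=|A|^{(n-4)/2}|\Rm|$ is the norm of a $C^1$ tensor $S$, so it is locally Lipschitz. Wherever $S\neq0$, $u$ is differentiable and $|\nabla u|\leq|\nabla S|$ by Cauchy--Schwarz. On the set $\{S=0\}$, $\nabla u=0$ almost everywhere by the standard fact that the gradient of a Lipschitz function vanishes a.e.\ on its level sets. This gives the weak Kato inequality $|\nabla u|\leq|\nabla S|$ almost everywhere, which is the form needed when testing against cutoffs later.
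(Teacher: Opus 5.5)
Your proof is correct and follows essentially the same route as the paper: you write the tensor as $|A|^{n/2}\,\Rm/(n^2H^2)$ and apply the product rule with the three Codazzi estimates to get the constant $4(n^2+8)$, and you use $O(d^{n/2})$ and $O(d^{(n-2)/2})$ bounds near $\{A=0\}$ for the $C^1$ extension. The minor differences are that you get the near-zero derivative bound from \eqref{h:eq:weightedgradient} together with $|\nabla A|^2-n^2|\nabla H|^2\leq|\nabla A|^2$, instead of the paper's independent Gauss-equation estimate \eqref{h:eq:zeroregularity}, and that you leave implicit the local trivialization the paper uses to make differentiability of a tensor at a zero point precise.
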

	
	\begin{pf}
		The first estimate follows immediately from~\eqref{h:eq:gaussnorm}.
		Where $H\neq0$, scalar flatness gives
		\[
		|A|^{(n-4)/2}\Rm
		=|A|^{n/2}\frac{\Rm}{n^2H^2},
		\qquad |\nabla|A||^2=n^2|\nabla H|^2.
		\]
		The product rule and $|X+Y|^2\leq2|X|^2+2|Y|^2$ give
		\begin{align*}
			&\left|\nabla\bigl(|A|^{(n-4)/2}\Rm\bigr)\right|^2\\
			&\quad\leq
			\frac{n^2}{2}|A|^{n-2}n^2|\nabla H|^2
			\left|\frac{\Rm}{n^2H^2}\right|^2
			+2|A|^n\left|\nabla\left(\frac{\Rm}{n^2H^2}\right)\right|^2\\
			&\quad\leq
			4n^2|A|^{n-2}\bigl(|\nabla A|^2-n^2|\nabla H|^2\bigr)
			+32|A|^n\left|\nabla\left(\frac{A}{nH}\right)\right|^2\\
			&\quad=
			4(n^2+8)|A|^{n-2}
			\bigl(|\nabla A|^2-n^2|\nabla H|^2\bigr),
		\end{align*}
		where the last two lines use all three estimates of
		Lemma~\ref{h:lem:codazzi}.
		
		We check the zero extension independently. Differentiating the Gauss
		equation gives $|\nabla\Rm|\leq4|A||\nabla A|$. Together with
		\eqref{h:eq:gaussnorm} and $|\nabla|A||\leq|\nabla A|$, this gives,
		away from $A=0$,
		\begin{equation}\label{h:eq:zeroregularity}
			\left|\nabla\bigl(|A|^{(n-4)/2}\Rm\bigr)\right|
			\leq\left(4+\frac{|n-4|}{\sqrt2}\right)
			|A|^{(n-2)/2}|\nabla A|.
		\end{equation}
		Fix a point $x_0$ where $A(x_0)=0$. On a sufficiently small coordinate
		neighborhood, smoothness of $A$ implies
		$|A(x)|\leq C\,d_g(x,x_0)$ and $|\nabla A(x)|\leq C$.
		The tensor and its derivative therefore satisfy
		\[
		\bigl||A|^{(n-4)/2}\Rm\bigr|(x)
		\leq C\,d_g(x,x_0)^{n/2},
		\]
		and, at points where $A(x)\neq0$,
		\[
		\left|\nabla\bigl(|A|^{(n-4)/2}\Rm\bigr)\right|(x)
		\leq C\,d_g(x,x_0)^{(n-2)/2}.
		\]
		Fix a smooth tensor trivialization on this neighborhood. Its
		component norms are uniformly comparable to the metric norm, and its
		ordinary derivatives differ from covariant derivatives by bounded
		smooth connection coefficients times the tensor. Since $n/2>1$,
		the first estimate proves differentiability of every extended component
		at $x_0$, with derivative zero. Since $(n-2)/2>0$, the second
		estimate, together with the vanishing of the tensor, proves continuity
		of those ordinary derivatives across the zero set. For $n=3$ the two exponents are $3/2$ and $1/2$,
		respectively, so the same argument applies. Thus the extension is $C^1$,
		and~\eqref{h:eq:weightedgradient} holds at the zero set as well.
		The norm of a $C^1$ tensor is locally Lipschitz; its weak gradient is
		bounded almost everywhere by the norm of the tensor's covariant
		derivative. This is the required Kato inequality.
	\end{pf}
	
	\section{Weighted estimates and intrinsic flatness}
	\label{sec:gap}
	
	We now derive an energy inequality for the weighted curvature directly
	in terms of $H$, $A$, and $\Rm$. The weight $|A|^{n-2}$ in the
	integration by parts has two roles: it gives a nonnegative gradient
	term, and it makes the cutoff error integrable under the hypothesis
	$\int_M|A|^n<\infty$.
	
	\subsection{The scalar-flat Simons identity}
	
	The Euclidean hypersurface identity arising from the Simons
	calculation~\cite{Simons}, with normalized mean curvature, is
	\begin{equation}\label{h:eq:simons}
		\Delta A_{ij}
		=n\nabla_i\nabla_jH+nH(A^2)_{ij}-|A|^2A_{ij}.
	\end{equation}
	For this nonminimal form, one applies Codazzi to
	$\Delta A_{ij}=\sum_k\nabla_k\nabla_j A_{ik}$ and commutes
	covariant derivatives using the Gauss equation. The trace term becomes
	$n\nabla_i\nabla_jH$, and the curvature terms give
	$nH(A^2)_{ij}-|A|^2A_{ij}$; see also the calculation in
	\cite[Lemma~2.2]{LiXuZhou}. Contracting with $A$ and using
	$|A|^2=n^2H^2$ gives
	\[
	\frac12\Delta|A|^2
	=|\nabla A|^2+n\langle A,\nabla^2H\rangle
	+nH\tr(A^3)-n^4H^4.
	\]
	On the other hand,
	\[
	\frac12\Delta(n^2H^2)=n^2H\Delta H+n^2|\nabla H|^2.
	\]
	Subtracting these equal expressions and rearranging gives
	\[
	n\langle nHg-A,\nabla^2H\rangle
	=|\nabla A|^2-n^2|\nabla H|^2
	+nH\tr(A^3)-n^4H^4.
	\]
	Codazzi implies $\Div A=n\nabla H$, so $\Div(nHg-A)=0$. Thus
	\begin{equation}\label{h:eq:divergence}
		\Div\bigl(n(nHg-A)\nabla H\bigr)
		=|\nabla A|^2-n^2|\nabla H|^2
		+nH\tr(A^3)-n^4H^4.
	\end{equation}
	
	\subsection{Integration by parts and the cutoff estimate}
	
	\begin{lemma}
		\label{h:lem:regularized}
		Let $n\geq3$ and let $\phi$ be a real-valued, compactly
		supported Lipschitz function on $M$. Then the following identity holds:
		\begin{align}
			&\int_M\phi^2|A|^{n-2}
			\bigl(|\nabla A|^2-n^2|\nabla H|^2\bigr)\notag\\
			&\quad+(n-2)n^2\int_M\phi^2|A|^{n-4}
			\bigl[nH(nHg-A)\bigr](\nabla H,\nabla H)\notag\\
			&=\int_M\phi^2|A|^{n-2}
			\bigl(n^4H^4-nH\tr(A^3)\bigr)\notag\\
			&\quad-2n\int_M\phi |A|^{n-2}
			(nHg-A)(\nabla H,\nabla\phi).
			\label{h:eq:weightedidentity}
		\end{align}
		The tensor $|A|^{n-4}nH(nHg-A)$ is extended by zero at $A=0$;
		\eqref{h:eq:newtonbounds} bounds it by a constant times
		$|A|^{n-2}$, so that this extension is continuous.
		
	\end{lemma}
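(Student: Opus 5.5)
We will multiply the divergence identity \eqref{h:eq:divergence} by the test function $\psi=\phi^2|A|^{n-2}$ and integrate by parts. Writing $X=n(nHg-A)\nabla H$, the identity \eqref{h:eq:divergence} says $\Div X=D-E$, where $D=|\nabla A|^2-n^2|\nabla H|^2$ and $E=n^4H^4-nH\tr(A^3)$. Since $\psi X$ has compact support, we expect
\[
\int_M\psi\,(D-E)=-\int_M\langle\nabla\psi,X\rangle .
\]
The gradient of $\psi$ will be computed on the set $\{A\neq0\}$. There scalar flatness gives $|A|^{n-2}=(n^2H^2)^{(n-2)/2}$, which is smooth, and
\[
\nabla|A|^{n-2}=(n-2)|A|^{n-4}\,n^2H\nabla H .
\]
Substituting,
\[
\langle\nabla\psi,X\rangle=2\phi|A|^{n-2}\,n(nHg-A)(\nabla\phi,\nabla H)+(n-2)n^2\phi^2|A|^{n-4}\,\bigl[nH(nHg-A)\bigr](\nabla H,\nabla H).
\]
Moving the second term to the left-hand side should give exactly \eqref{h:eq:weightedidentity}, using the symmetry of $nHg-A$ to match the order of arguments.

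The main obstacle is justifying the integration by parts, since $\psi$ need not be smooth: $\phi$ is only Lipschitz, and $|A|^{n-2}$ may fail to be $C^1$ at the zero set $\{A=0\}=\{H=0\}$ when $n=3$, where it equals $|A|=n|H|$. To handle this, I will first note that $|A|^{n-2}=n^{n-2}|H|^{n-2}$ is locally Lipschitz for every $n\geq3$. Hence $\psi$ is Lipschitz with compact support, and $X$ is a smooth vector field. For a Lipschitz compactly supported $\psi$ and a smooth $X$, the formula $\int\psi\Div X=-\int\langle\nabla\psi,X\rangle$ holds, either by mollifying $\psi$ or from the definition of the weak gradient via Rademacher's theorem.

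It then remains to identify the weak gradient almost everywhere. On $\{H\neq0\}$ it is given by the formula above. On the zero set $\{H=0\}$ the weak gradient of $|H|^{n-2}$ vanishes almost everywhere, because Lipschitz functions have zero gradient a.e. on their level sets. There the right-hand expression also vanishes, since the extended tensor $|A|^{n-4}nH(nHg-A)$ is zero. By \eqref{h:eq:newtonbounds} this tensor is bounded by $\tfrac{2(n-1)}{n}|A|^{n-2}$, so it is continuous, and every integrand is bounded and compactly supported, hence integrable.

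Likewise, on the zero set both $D$ (by Lemma~\ref{h:lem:codazzi}) and $E$ vanish, so the identity holds pointwise almost everywhere after the substitution, and integrating completes the proof. As an alternative to the level-set argument, I could approximate $|A|^{n-2}$ by $(|A|^2+\varepsilon)^{(n-2)/2}$, derive the identity for each $\varepsilon>0$, and let $\varepsilon\to0$ by dominated convergence. In that case the relevant factor $(|A|^2+\varepsilon)^{(n-4)/2}|A|^2$ is bounded by a constant times $|A|^{n-2}$ uniformly in $\varepsilon$, so the limit is justified.
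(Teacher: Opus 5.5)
Your argument is correct, but it takes a more direct route than the paper.

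\textbf{The paper's route.} The paper first assumes $\phi$ smooth and multiplies \eqref{h:eq:divergence} by the smooth weight $\phi^2(|A|^2+\delta^2)^{(n-2)/2}$, so that only the classical divergence theorem is used. It then lets $\delta\downarrow0$ term by term by dominated convergence, treating the negative power $(n-4)/2$ for $n=3$ through \eqref{h:eq:newtonbounds}. Finally, it passes to Lipschitz $\phi$ by a separate approximation in $W^{1,2}$.

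\textbf{Your route.} You observe that $\psi=\phi^2|A|^{n-2}=n^{n-2}\phi^2|H|^{n-2}$ is already a compactly supported Lipschitz function, because $n-2\geq1$. The weak divergence theorem against the smooth, globally defined field $X=n(nHg-A)\nabla H$ then handles, in one step and with no limits, both the roughness of $\phi$ and the kink of $|H|$ at $\{H=0\}$ when $n=3$. All that remains is the a.e.\ identification of $\langle\nabla\psi,X\rangle$, which on $\{H\neq0\}$ is exactly your chain-rule computation. Your route is shorter. The paper's route keeps every integration by parts classical and isolates the analytic issues in explicit limiting arguments.

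\textbf{Two small remarks.}

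First, on $\{H=0\}$ you do not need the fact that Lipschitz functions have a.e.\ vanishing gradient on level sets. Scalar flatness forces $A=0$ there, so $nHg-A=0$ and $X$ itself vanishes. Hence $\langle\nabla\psi,X\rangle=0$ there regardless of $\nabla\psi$. Likewise, $D$ and $E$ enter only multiplied by $|A|^{n-2}$, which vanishes on that set, so you need not invoke their vanishing either.

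Second, in your alternative regularization, the claim that $(|A|^2+\varepsilon)^{(n-4)/2}|A|^2\leq C|A|^{n-2}$ uniformly in $\varepsilon$ is false for $n\geq5$: take points where $|A|^2\ll\varepsilon$. What dominated convergence actually needs, and what the paper uses, is the $\varepsilon$-independent, locally bounded majorant $(1+|A|^2)^{(n-4)/2}|A|^2$ for $0<\varepsilon\leq1$. This suffices on the compact support of $\phi$.
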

	
	\begin{pf}
		First take $\phi$ smooth and fix a compact set containing its support.
		Multiply~\eqref{h:eq:divergence} by
		$\phi^2(|A|^2+\delta^2)^{(n-2)/2}$, where $0<\delta\leq1$.
		This multiplier is smooth, so the divergence theorem gives an ordinary
		integration by parts. Since $\nabla|A|^2=2n^2H\nabla H$,
		\[
		\nabla\bigl((|A|^2+\delta^2)^{(n-2)/2}\bigr)
		=(n-2)n^2H(|A|^2+\delta^2)^{(n-4)/2}\nabla H.
		\]
		The resulting equality is exactly~\eqref{h:eq:weightedidentity}, with
		$|A|^{n-2}$ replaced by $(|A|^2+\delta^2)^{(n-2)/2}$ and
		$|A|^{n-4}$ by $(|A|^2+\delta^2)^{(n-4)/2}$.
		
		We give uniform domination for every type of term. When $n=3$,
		the regularized basic weight is bounded by $(1+|A|^2)^{1/2}$.
		The only negative power occurs in the quadratic term, and
		\eqref{h:eq:newtonbounds} gives
		\[
		\left|\frac{nH(nHg-A)}{\sqrt{|A|^2+\delta^2}}\right|
		\leq\frac{2(n-1)}{n}
		\frac{|A|^2}{\sqrt{|A|^2+\delta^2}}
		\leq\frac{2(n-1)}{n}|A|.
		\]
		Here the norm is the operator norm; equivalently the same bound holds
		as an inequality of quadratic forms. Thus the absolute values of the
		four types of integrands are bounded, up to fixed factors, by
		\begin{align*}
			&\phi^2(1+|A|^2)^{1/2}
			(|\nabla A|^2+n^2|\nabla H|^2),\\
			&\phi^2|A|\,|\nabla H|^2,\\
			&\phi^2(1+|A|^2)^{1/2}
			(n^4H^4+n|H|\,|\tr(A^3)|),\\
			&|\phi|(1+|A|^2)^{1/2}
			(n|H|+|A|)|\nabla H|\,|\nabla\phi|.
		\end{align*}
		They are integrable on the fixed compact support and independent of
		$\delta$. For $n\geq4$, both weight powers are nonnegative and
		uniformly bounded there for $0<\delta\leq1$, so the corresponding
		integrands have the same compact-support domination. At $A=0$ the
		quadratic tensor is zero for every $\delta$, while away from that
		set it converges to its stated zero extension. Dominated convergence
		therefore permits $\delta\downarrow0$ in all four terms and gives
		\eqref{h:eq:weightedidentity} for smooth $\phi$.
		
		Finally, a real-valued compactly supported Lipschitz $\phi$ can
		be approximated in local charts by real-valued smooth functions
		with supports in one fixed compact neighborhood, uniformly bounded
		in $L^\infty$ and
		converging in $W^{1,2}$ and pointwise almost everywhere. All tensor
		coefficients in~\eqref{h:eq:weightedidentity}, with the zero
		extensions already specified, are bounded on that neighborhood.
		The quadratic terms converge in $L^1$, and the cutoff term converges
		by the $L^2$ convergence of $\phi$ and its gradient. This proves the
		identity for all real-valued compactly supported Lipschitz functions.
		The approximants need not be nonnegative.
	\end{pf}
	
	\begin{proposition}[Weighted curvature energy]\label{h:prop:energy}
		For every nonnegative compactly supported Lipschitz function $\phi$,
		\begin{align}
			&\int_M
			\left|\nabla\bigl(\phi |A|^{(n-4)/2}|\Rm|\bigr)\right|^2
			\notag\\
			&\quad\leq
			2n(n^2+8)\int_M\phi^2|A|^{n-2}|\Rm|^2
			+C(n)\int_M|A|^n|\nabla\phi|^2,
			\label{h:eq:productenergy}
		\end{align}
		where one may take
		\[
		C(n)=\frac{32(n^2+8)(n-1)}{n(n-2)}+4.
		\]
		All weighted expressions use the zero extension in
		Lemma~\ref{h:prop:weightedcurvature}.
	\end{proposition}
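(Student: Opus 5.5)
Set $u=|A|^{(n-4)/2}|\Rm|$, which by Lemma~\ref{h:prop:weightedcurvature} is locally Lipschitz with $|\nabla u|\leq|\nabla(|A|^{(n-4)/2}\Rm)|$ almost everywhere. Then $\phi u$ is compactly supported Lipschitz and
\[
|\nabla(\phi u)|^2\leq 2\phi^2|\nabla u|^2+2u^2|\nabla\phi|^2 .
\]
By \eqref{h:eq:weightedtensor}, $u^2\leq2|A|^n$, so the second piece contributes at most $4\int_M|A|^n|\nabla\phi|^2$. This accounts for the summand $4$ in $C(n)$. The first piece is bounded via \eqref{h:eq:weightedgradient} by
\[
8(n^2+8)\int_M\phi^2|A|^{n-2}D,\qquad D=|\nabla A|^2-n^2|\nabla H|^2 .
\]
So it remains to bound $\int\phi^2|A|^{n-2}D$ by the curvature term and a cutoff term.

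For this I apply Lemma~\ref{h:lem:regularized} with the given $\phi$. On the left, the second integral is nonnegative by \eqref{h:eq:newtonbounds} and $n\geq3$. This gives
\[
\int\phi^2|A|^{n-2}D+(n-2)n^2\int\phi^2|A|^{n-4}Q(\nabla H,\nabla H)\leq\int\phi^2|A|^{n-2}E+2n\Bigl|\int\phi|A|^{n-2}(nHg-A)(\nabla H,\nabla\phi)\Bigr|,
\]
where $Q=nH(nHg-A)\geq0$ and $E=n^4H^4-nH\tr(A^3)$.

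By \eqref{h:eq:algebra}, $E\leq\frac n4|\Rm|^2$, so the first right-hand term is at most $\frac n4\int\phi^2|A|^{n-2}|\Rm|^2$. For the cutoff term, write $|A|^{n-2}(nHg-A)=|A|^{n-4}\cdot\frac{|A|^2}{nH}(nHg-A)=|A|^{n-4}\,nH\,(nHg-A)$ where $H\neq0$, using $|A|^2=n^2H^2$. The integrand is then $\phi|A|^{n-4}Q(\nabla H,\nabla\phi)$, and it vanishes where $A=0$.

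Cauchy--Schwarz for the nonnegative form $Q$ gives
\[
|Q(\nabla H,\nabla\phi)|\leq Q(\nabla H,\nabla H)^{1/2}Q(\nabla\phi,\nabla\phi)^{1/2}.
\]
Young's inequality then absorbs the $Q(\nabla H,\nabla H)$ part into the left-hand term $(n-2)n^2\int\phi^2|A|^{n-4}Q(\nabla H,\nabla H)$. The remainder is $\frac{n^2}{(n-2)n^2}\int|A|^{n-4}Q(\nabla\phi,\nabla\phi)$, up to exact constants. By \eqref{h:eq:newtonbounds}, $Q\leq\frac{2(n-1)}n|A|^2g$, so this is at most $\frac{2(n-1)}{n(n-2)}\int|A|^n|\nabla\phi|^2$. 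Hence
\[
\int\phi^2|A|^{n-2}D\leq\frac n4\int\phi^2|A|^{n-2}|\Rm|^2+\frac{4(n-1)}{n(n-2)}\int|A|^n|\nabla\phi|^2 .
\]
The constant in the last term comes from $(2n)^2/(4(n-2)n^2)$ times $2(n-1)/n$, with some slack.

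Multiplying by $8(n^2+8)$ gives the curvature coefficient $2n(n^2+8)$ and a cutoff coefficient $32(n^2+8)(n-1)/(n(n-2))$. Adding the $4$ from the first step yields exactly $C(n)$.

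The main care point is the absorption step. I must rewrite the cutoff integrand through $Q$ to apply Cauchy--Schwarz with a nonnegative form, and handle the zero set of $A$ and the $|A|^{n-4}$ weight for $n=3$ via the zero extensions already justified in Lemma~\ref{h:lem:regularized}. The positivity of $Q$ from \eqref{h:eq:newtonbounds} is what makes the absorption legitimate.
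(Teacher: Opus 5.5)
Your proposal follows the paper's proof essentially step for step, and it arrives at the same constants $2n(n^2+8)$ and $C(n)$:
\begin{itemize}
\item the product rule, the weak Kato inequality and \eqref{h:eq:weightedtensor} give the summand $4$;
\item Lemma~\ref{h:lem:regularized}, Cauchy--Schwarz for the nonnegative form $Q=nH(nHg-A)$, Young's inequality and \eqref{h:eq:newtonbounds} bound $\int\phi^2|A|^{n-2}D$ (the paper's \eqref{h:eq:defectenergy});
\item \eqref{h:eq:weightedgradient} together with $E\le\frac n4|\Rm|^2$ gives the stated coefficients.
\end{itemize}

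One computation in your absorption step is wrong, although your final displayed bound on $\int\phi^2|A|^{n-2}D$ is correct.
\begin{itemize}
\item The identity $|A|^{n-2}(nHg-A)=|A|^{n-4}Q$ is off by a factor $nH$. Since $|A|^{n-2}=|A|^{n-4}n^2H^2$, one actually has $|A|^{n-2}(nHg-A)=nH\,|A|^{n-4}Q$.
\item So the cutoff integrand has absolute value $\phi|A|^{n-3}|Q(\nabla H,\nabla\phi)|$, not $\phi|A|^{n-4}|Q(\nabla H,\nabla\phi)|$. Counting homogeneity in $A$ shows the missing power.
\item Consequently, your remainder $\int|A|^{n-4}Q(\nabla\phi,\nabla\phi)$ is bounded by \eqref{h:eq:newtonbounds} only by a multiple of $\int|A|^{n-2}|\nabla\phi|^2$, not of $\int|A|^n|\nabla\phi|^2$.
\end{itemize}
The two slips cancel, which is why your end result comes out right.

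The correct bookkeeping, as in the paper, splits $|A|^{n-3}=|A|^{(n-4)/2}\cdot|A|^{(n-2)/2}$ in Young's inequality. The absorbed term is then $\phi^2|A|^{n-4}Q(\nabla H,\nabla H)$. The remainder is $\frac{2}{n-2}|A|^{n-2}Q(\nabla\phi,\nabla\phi)\le\frac{4(n-1)}{n(n-2)}|A|^n|\nabla\phi|^2$ when half of the positive left-hand term is absorbed. Absorbing all of it gives $\frac{2(n-1)}{n(n-2)}$, which accounts for the slack you mention.
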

	
	\begin{pf}
		Apply Lemma~\ref{h:lem:regularized} to obtain
		\eqref{h:eq:weightedidentity}. The second integral on the left is nonnegative. We estimate the final
		integral on the right in terms of it. On $H\neq0$, the
		Cauchy--Schwarz inequality for the nonnegative quadratic form
		$nH(nHg-A)$ gives
		\begin{align*}
			&2n\phi |A|^{n-2}
			|(nHg-A)(\nabla H,\nabla\phi)|\\
			&\quad\leq
			2n\phi |A|^{n-3}
			\sqrt{\bigl[nH(nHg-A)\bigr](\nabla H,\nabla H)}
			\sqrt{\bigl[nH(nHg-A)\bigr](\nabla\phi,\nabla\phi)}.
		\end{align*}
		Young's inequality now yields
		\begin{align*}
			&2n\phi |A|^{n-2}
			|(nHg-A)(\nabla H,\nabla\phi)|\\
			&\quad\leq
			\frac{(n-2)n^2}{2}\phi^2|A|^{n-4}
			\bigl[nH(nHg-A)\bigr](\nabla H,\nabla H)\\
			&\qquad+\frac2{n-2}|A|^{n-2}
			\bigl[nH(nHg-A)\bigr](\nabla\phi,\nabla\phi)\\
			&\quad\leq
			\frac{(n-2)n^2}{2}\phi^2|A|^{n-4}
			\bigl[nH(nHg-A)\bigr](\nabla H,\nabla H)
			+\frac{4(n-1)}{n(n-2)}|A|^n|\nabla\phi|^2.
		\end{align*}
		The final line uses~\eqref{h:eq:newtonbounds}. The same inequality
		holds at $A=0$ by continuity. Substitution
		into~\eqref{h:eq:weightedidentity} absorbs half of the nonnegative
		quadratic term. Dropping the remaining half gives
		\begin{align}
			&\int_M\phi^2|A|^{n-2}
			\bigl(|\nabla A|^2-n^2|\nabla H|^2\bigr)\notag\\
			&\quad\leq
			\int_M\phi^2|A|^{n-2}\bigl(n^4H^4-nH\tr(A^3)\bigr)
			+\frac{4(n-1)}{n(n-2)}\int_M|A|^n|\nabla\phi|^2.
			\label{h:eq:defectenergy}
		\end{align}
		
		Use~\eqref{h:eq:weightedgradient} on the left
		and~\eqref{h:eq:algebra} on the right to obtain
		\begin{align}
			&\int_M\phi^2
			\left|\nabla\bigl(|A|^{(n-4)/2}\Rm\bigr)\right|^2
			\notag\\
			&\quad\leq
			n(n^2+8)\int_M\phi^2|A|^{n-2}|\Rm|^2
			+\frac{16(n^2+8)(n-1)}{n(n-2)}
			\int_M|A|^n|\nabla\phi|^2.
			\label{h:eq:tensorenergy}
		\end{align}
		For clarity, the factor $n(n^2+8)$ is the product of
		$4(n^2+8)$ from~\eqref{h:eq:weightedgradient} and $n/4$ from the
		bound $n^4H^4-nH\tr(A^3)\leq(n/4)|\Rm|^2$.
		
		Finally, the product rule, the weak Kato inequality, and
		\eqref{h:eq:weightedtensor} give
		\begin{align*}
			&\int_M\left|\nabla\bigl(\phi |A|^{(n-4)/2}|\Rm|\bigr)\right|^2\\
			&\quad\leq
			2\int_M\phi^2\left|\nabla\bigl(|A|^{(n-4)/2}\Rm\bigr)\right|^2
			+2\int_M|A|^{n-4}|\Rm|^2|\nabla\phi|^2\\
			&\quad\leq
			2\int_M\phi^2\left|\nabla\bigl(|A|^{(n-4)/2}\Rm\bigr)\right|^2
			+4\int_M|A|^n|\nabla\phi|^2.
		\end{align*}
		Combining this with~\eqref{h:eq:tensorenergy}
		proves~\eqref{h:eq:productenergy}, with the stated $C(n)$.
	\end{pf}
	
	\subsection{Smallness, absorption, and intrinsic exhaustion}
	
	\begin{pf}[Proof of the intrinsic flatness assertion in
		Theorem~\ref{h:thm:main}]
		Assume $n\geq3$. The case $n=2$ and the classification of the
		immersion will be completed in Section~\ref{sec:classification}.
		
		From~\eqref{h:eq:beta},
		$\norm{H}_{L^n}<\beta_n$ implies
		$\norm{H}_{L^n}<1/(2nC_n)$. Thus~\eqref{h:eq:sobolev} is available
		with the bound~\eqref{h:eq:sobolevbound}. For a nonnegative cutoff $\phi$, Lemma~\ref{h:prop:weightedcurvature}
		shows that
		\[
		\phi |A|^{(n-4)/2}|\Rm|
		\]
		is an admissible test function. Applying
		\eqref{h:eq:sobolev} and Proposition~\ref{h:prop:energy} gives
		\begin{align}
			&\norm{\phi |A|^{(n-4)/2}|\Rm|}_{L^{2n/(n-2)}}^2
			\notag\\
			&\quad\leq
			2n(n^2+8)C_S\int_M\phi^2|A|^{n-2}|\Rm|^2
			+C(n)C_S\int_M|A|^n|\nabla\phi|^2.
			\label{h:eq:absorptionstart}
		\end{align}
		The first integral has exactly the critical factorization
		\[
		\phi^2|A|^{n-2}|\Rm|^2
		=|A|^2\bigl(\phi |A|^{(n-4)/2}|\Rm|\bigr)^2.
		\]
		H\"older's inequality with conjugate exponents $n/2$ and $n/(n-2)$
		therefore gives
		\begin{align*}
			\int_M\phi^2|A|^{n-2}|\Rm|^2
			&\leq
			\left(\int_M|A|^n\right)^{2/n}\\
			&\quad\cdot
			\left(\int_M
			\bigl(\phi |A|^{(n-4)/2}|\Rm|\bigr)^{2n/(n-2)}
			\right)^{(n-2)/n}.
		\end{align*}
		It follows that
		\begin{align}
			&\norm{\phi |A|^{(n-4)/2}|\Rm|}_{L^{2n/(n-2)}}^2
			\notag\\
			&\quad\leq
			2n(n^2+8)C_S\norm{A}_{L^n}^2
			\norm{\phi |A|^{(n-4)/2}|\Rm|}_{L^{2n/(n-2)}}^2
			\notag\\
			&\qquad+C(n)C_S\int_M|A|^n|\nabla\phi|^2.
			\label{h:eq:absorption}
		\end{align}
		
		We now check the smallness needed to absorb the curvature term.
		The equality $\norm{A}_{L^n}=n\norm{H}_{L^n}$
		and~\eqref{h:eq:sobolevbound} yield
		\begin{align}
			2n(n^2+8)C_S\norm{A}_{L^n}^2
			&\leq
			\frac{32n^3(n^2+8)(n-1)^2C_n^2}{(n-2)^2}
			\norm{H}_{L^n}^2\notag\\
			&<\frac14,
			\label{h:eq:smallcoefficient}
		\end{align}
		Indeed, substituting~\eqref{h:eq:beta} gives the exact identity
		\[
		\frac{32n^3(n^2+8)(n-1)^2C_n^2}{(n-2)^2}\,\beta_n^2
		=\frac14.
		\]
		Move this multiple of the left-hand side
		of~\eqref{h:eq:absorption} to the left. Since its remaining
		coefficient exceeds $3/4$, we obtain, in particular,
		\begin{equation}\label{h:eq:finalcutoff}
			\norm{\phi |A|^{(n-4)/2}|\Rm|}_{L^{2n/(n-2)}}^2
			\leq2C(n)C_S\int_M|A|^n|\nabla\phi|^2.
		\end{equation}
		Only the cutoff error remains.
		
		Fix $o\in M$. Completeness and Hopf--Rinow make closed intrinsic balls
		compact. For $r>0$, take the cutoff equal to one on
		$B_r(o)$, equal to $2-d_g(o,\cdot)/r$ on
		$B_{2r}(o)\setminus B_r(o)$, and zero outside $B_{2r}(o)$.
		Denote it by $\phi_r$. It has $0\leq\phi_r\leq1$ and
		$|\nabla\phi_r|\leq1/r$ almost everywhere. Moreover,
		\[
		\int_M|A|^n=n^n\int_M|H|^n<\infty.
		\]
		Using $\phi=\phi_r$ in~\eqref{h:eq:finalcutoff} gives
		\begin{align}
			&\left(\int_{B_r(o)}
			\bigl(|A|^{(n-4)/2}|\Rm|\bigr)^{2n/(n-2)}
			\right)^{(n-2)/n}\notag\\
			&\quad\leq
			\frac{2C(n)C_S}{r^2}\int_M|A|^n
			\longrightarrow0\qquad(r\to\infty).
			\label{h:eq:vanishing}
		\end{align}
		For any fixed $\rho>0$, the same integral over $B_\rho(o)$ is
		bounded by the left-hand side for all $r\geq\rho$. It must therefore
		be zero. Exhaustion by balls and continuity of the weighted curvature
		show that
		\[
		|A|^{(n-4)/2}\Rm\equiv0.
		\]
		At a point with $A\neq0$, the weight is a positive finite number, so
		$\Rm=0$. At $A=0$, the Gauss equation already gives $\Rm=0$.
		This proves intrinsic flatness on $M$. It remains to pass from intrinsic flatness to the affine-hyperplane classification.
	\end{pf}
	
	\begin{remark}
		The argument uses only the geometric quantities $H$, $A$, and $\Rm$,
		together with the cutoff and the Sobolev constant. For $n=4$, the
		weighted tensor is simply $\Rm$. For $n=3$, the Gauss bound
		$|\Rm|\leq\sqrt2|A|^2$ makes the weight harmless at $A=0$.
		No sign for $H$, positive lower bound for the Newton tensor,
		properness, or extrinsic volume-growth assumption is used.
	\end{remark}
	
	\begin{remark}[A larger sufficient threshold]
		Keeping the exact Sobolev constant in~\eqref{h:eq:sobolev}
		gives a larger threshold within the same argument. Indeed, if
		$h=\norm{H}_{L^n}$, then the coefficient in~\eqref{h:eq:absorption}
		is
		\[
		\left(
		\frac{2n(n-1)C_n\sqrt{2n(n^2+8)}\,h}
		{(n-2)(1-nC_nh)}
		\right)^2.
		\]
		It is less than one provided
		\begin{equation}\label{h:eq:improvedbeta}
			h<\widehat\beta_n:=
			\frac{n-2}
			{nC_n\bigl[(n-2)+2(n-1)\sqrt{2n(n^2+8)}\bigr]}.
		\end{equation}
		This condition also implies $nC_nh<1$. For a fixed
		$h<\widehat\beta_n$, absorption gives~\eqref{h:eq:finalcutoff}
		with a finite constant depending on $n$, $C_n$, and $h$ in
		place of $2C(n)C_S$, so the same exhaustion proves flatness.
		The main theorem uses the smaller $\beta_n$ to keep a uniform
		absorption margin. Neither threshold is claimed to be sharp.
	\end{remark}
	
	\section{Global classification of flat hypersurfaces}
	\label{sec:classification}
	
	Intrinsic flatness allows a complete hypersurface to bend along one
	direction. The finite mean curvature integral excludes this bending.
	We give the global argument separately, since integrability on $M$
	must not be confused with integrability on an infinite covering.
	
	\begin{proposition}[Classification of flat hypersurfaces]
		\label{h:prop:flatclassification}
		Let $n\geq2$ and let $F:(M^n,g)\to\R^{n+1}$ be a connected,
		complete, intrinsically flat immersion. If
		\[
		\int_M|H|^q\dd v_g<\infty
		\]
		for some $q>0$, then $A\equiv0$, $M$ is isometric to $\R^n$, and
		$F$ is an isometric embedding onto an affine hyperplane.
	\end{proposition}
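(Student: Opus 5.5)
We pass to the universal cover and apply the Hartman--Nirenberg cylinder theorem~\cite{HartmanNirenberg} there, then push the conclusion back to $M$. Since $M$ is complete and flat, its universal cover $\pi:\R^n\to M$ is a Riemannian covering, and $\tilde F=F\circ\pi:\R^n\to\R^{n+1}$ is a complete flat immersion. Hartman--Nirenberg gives a unit vector $e$, a unit-speed plane curve $\gamma:\R\to\R^2$ in a plane orthogonal to $\R^{n-1}$, and linear coordinates $x=(s,y)\in\R\times\R^{n-1}$ on $\R^n$ such that, up to a rigid motion,
\[
\tilde F(s,y)=(\gamma(s),y).
\]
Consequently $\tilde A$ has rank at most one, with single nonzero principal curvature $\kappa(s)$, the curvature of $\gamma$. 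Then $n\tilde H=\kappa(s)$, which depends only on $s$. Along the rulings, the $y$-directions are straight lines on which $A$ vanishes; this holds wherever $\kappa\neq0$, and trivially elsewhere.

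We then argue by contradiction: suppose $A\not\equiv0$, so $\kappa(s_0)\neq0$ for some $s_0$. On an interval $I\ni s_0$ we have $|\kappa|\geq c>0$. Consider the open set $U=\pi(I\times\R^{n-1})\subset M$ and its volume. The deck group $\Gamma$ acts by isometries preserving $\tilde F$ up to ambient motions, hence preserving $|\tilde H|$. The key point is that the flat strip $I\times\R^{n-1}$ projects to $M$ with infinite volume. Each ruling line $\{s\}\times\R\, v$ is a geodesic of $M$ along which $F$ is a straight line in $\R^{n+1}$, and it is injectively parametrized in the ambient space, since $F\circ\pi$ restricted to it is an affine line. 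Therefore its projection to $M$ is a geodesic that never closes up, and the same holds for parallel nearby lines.

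More concretely, the plan for the volume estimate is as follows. Choose a unit ruling direction $v$, and consider the tube $T_R=\{(s,y_0+t v): s\in I,\ |t|<R\}$, enlarged by a small ball in the remaining $y$-directions. Then $\tilde F$ is injective on $T_R$, because the $y$-coordinates appear directly in the image and $\gamma$ is injective on a short interval $I$. Hence $\pi$ is injective on $T_R$: if $\pi(p)=\pi(q)$, then $\tilde F(p)=\tilde F(q)$. So $\mathrm{vol}(\pi(T_R))=\mathrm{vol}(T_R)\geq cR$ for a constant $c>0$. On $\pi(T_R)$ we have $|H|\geq c/n$, and therefore
\[
\int_M|H|^q\dd v_g\geq (c/n)^q\, cR\to\infty,
\]
which contradicts the hypothesis. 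So $A\equiv0$.

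With $A\equiv0$, $\tilde F$ is totally geodesic, hence an isometric embedding of $\R^n$ onto a hyperplane. Since $\tilde F=F\circ\pi$ is injective, $\pi$ is injective, so $M=\R^n$; choosing $\Psi$ and $Q_0$ gives~\eqref{h:eq:hyperplaneclassification}.

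The main obstacle is the injectivity step, i.e.\ making sure integrability on $M$ is not confused with integrability on the cover, as the section warns. We handle it by using that $\tilde F$ factors through $\pi$, so any region on which $\tilde F$ is injective embeds into $M$. We also need the image of $\gamma|_I$ to be injective, which holds for a short enough $I$ since $\gamma$ is an immersed unit-speed curve.
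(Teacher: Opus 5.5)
Your proposal is correct and follows essentially the paper's proof: pass to the universal cover $\R^n$, apply Hartman--Nirenberg to $F\circ\pi$, choose a short interval $I$ on which $|\kappa|\geq c$ and $\gamma|_I$ is injective, use injectivity of $F\circ\pi$ over $I$ to embed that region isometrically into $M$, let its volume diverge, and conclude with $\kappa\equiv0$ and injectivity of $\pi$. The only differences are that the paper integrates over the full strip $I\times(-L,L)^{n-1}$ rather than your thinner tube, and it omits your heuristic remarks on the deck group and ruling geodesics, which are not needed (deck transformations in fact preserve $F\circ\pi$ exactly, not merely up to ambient motions).
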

	
	\begin{pf}
		Let $\pi:\widetilde M\to M$ be the Riemannian universal covering.
		The lifted metric is complete: a geodesic upstairs projects to a
		geodesic on the complete manifold $M$, and its continuation downstairs
		lifts for every finite parameter interval. It is also flat.
		A complete simply connected flat Riemannian $n$-manifold is globally
		isometric to $\R^n$. We therefore identify $\widetilde M$ with $\R^n$.
		
		We use the Hartman--Nirenberg theorem~\cite{HartmanNirenberg} in the following
		form: \emph{if $G:\R^n\to\R^{n+1}$, $n\geq2$, is an isometric immersion of the
			flat Euclidean space, then, after rigid motions of the domain and of the
			target, $G(s,y)=(\gamma(s),y)$ with $\gamma$ a unit-speed plane curve.}
		Here ``flat'' means that all sectional curvatures of the induced metric vanish.
		For $n\geq3$ the vanishing of the Gauss--Kronecker curvature alone would not
		suffice: the product of a nonflat surface of $\R^3$ with $\R$ is complete and
		has zero Gauss--Kronecker curvature, but it is not a cylinder over a plane
		curve.
		
		Apply the Hartman--Nirenberg cylinder
		theorem~\cite{HartmanNirenberg} to the complete isometric immersion
		$F\circ\pi:\R^n\to\R^{n+1}$. After Euclidean rigid motions in the
		domain and ambient space, it has the form
		\begin{equation}\label{h:eq:cylinder}
			(F\circ\pi)(s,y)=(\gamma(s),y),
			\qquad (s,y)\in\R\times\R^{n-1},
		\end{equation}
		where $\gamma:\R\to\R^2$ is a unit-speed smooth plane curve. The
		induced metric upstairs is $\dd s^2+|\dd y|^2$. If $\kappa(s)$ is
		the signed curvature of $\gamma$, the principal curvatures of this
		cylinder are $\kappa(s),0,\ldots,0$. Consequently,
		\begin{equation}\label{h:eq:cylindermean}
			|H|\circ\pi=\frac{|\kappa(s)|}{n}.
		\end{equation}
		
		Suppose that $\kappa(s_0)\neq0$. By continuity, there is a bounded
		open interval $I$ containing $s_0$ and a number $c>0$ such that
		$|\kappa(s)|\geq c$ for $s\in I$. Shrinking $I$ if necessary,
		we may also assume that $\gamma$ is injective on $I$. Indeed,
		$\gamma'(s_0)$ is a unit vector, and projection onto that direction
		has positive derivative in a sufficiently small neighborhood of $s_0$.
		
		The covering map $\pi$ is injective on the full strip
		$I\times\R^{n-1}$. To see this, suppose
		$\pi(s,y)=\pi(s',y')$ with $s,s'\in I$. Applying $F$ and using
		\eqref{h:eq:cylinder} gives
		\[
		(\gamma(s),y)=(\gamma(s'),y').
		\]
		Therefore $y=y'$ and, by injectivity of $\gamma|_I$, $s=s'$.
		Since $\pi$ is a local isometry and an open map, it identifies this
		strip isometrically with an open subset of $M$. We may thus integrate
		on that subset without any multiplicity.
		
		For each $L>0$, positivity of the integrand and
		\eqref{h:eq:cylindermean} give
		\begin{align}
			\int_M|H|^q\dd v_g
			&\geq\int_{I\times(-L,L)^{n-1}}
			\left(\frac{|\kappa(s)|}{n}\right)^q\dd s\dd y
			\notag\\
			&\geq\left(\frac cn\right)^q|I|(2L)^{n-1}.
			\label{h:eq:cylinderdivergence}
		\end{align}
		Here $|I|>0$ is the length of the interval. Since $n\geq2$, the
		right-hand side tends to infinity as $L\to\infty$, contradicting
		the assumed finite integral. Hence $\kappa$ vanishes identically.
		
		A unit-speed plane curve with zero curvature has constant tangent
		vector. Thus $\gamma(s)=\gamma(0)+s\gamma'(0)$, and
		$F\circ\pi$ is an injective affine isometric map onto an affine
		hyperplane. This also proves injectivity of $\pi$: if two points
		have the same projection, their images under $F\circ\pi$ agree,
		so the points coincide. As a bijective local isometry, $\pi$ is a
		global isometry. Consequently, $M$ itself is isometric to $\R^n$,
		$F$ has the claimed affine-hyperplane form, and $A\equiv0$.
	\end{pf}
	
	\begin{remark}
		Proposition~\ref{h:prop:flatclassification} requires finiteness of
		$\int|H|^q$ for just one exponent $q>0$, rather than smallness
		at the critical exponent. The critical exponent is needed in the
		preceding analytic step that establishes intrinsic flatness.
	\end{remark}
	
	\begin{pf}[Completion of the proof of Theorem~\ref{h:thm:main}]
		Section~\ref{sec:gap} proves $\Rm=0$ for $n\geq3$. In this
		case the hypothesis gives $\int_M|H|^n\dd v_g<\infty$, and
		Proposition~\ref{h:prop:flatclassification} applies with $q=n$.
		For $n=2$, the identity $R=2K=0$ gives intrinsic flatness
		directly, and the assumed finite integral with some $q>0$
		again permits application of Proposition~\ref{h:prop:flatclassification}.
		In either case, $A=0$, $M$ is isometric to $\R^n$, and $F$
		is an isometric embedding onto an affine hyperplane.
		
		Choose the isometry $\Psi:\R^n\to M$ given by this classification and
		an ambient rigid motion that sends the affine image to
		$\R^n\times\{0\}$. Absorbing the remaining Euclidean isometry into $\Psi$ gives
		the precise form~\eqref{h:eq:hyperplaneclassification}. This representation
		proves simple connectedness, injectivity, and total geodesicity. It also
		proves properness: the intersection of a compact ambient set with the
		closed affine hyperplane is compact, and its inverse image under the
		isometry is compact. Conversely, an affine hyperplane is complete and
		satisfies $A=H=R=0$, so it satisfies every stated smallness requirement.
		For $n=2$, only finiteness of $\int_M|H|^q\dd v_g$ for
		one exponent $q>0$ is needed, as asserted.
	\end{pf}
	
	\section{Testing the scalar-flat and smallness assumptions}
	\label{sec:scalar-necessity}
	The following examples test the scalar-flat and smallness hypotheses
	of Theorem~\ref{h:thm:main}. The rotational model in
	Example~\ref{prop:finite-test} is classical and belongs to the
	two-ended scalar-flat family studied by Hounie--Leite~\cite{HounieLeite}.
	Its finite total mean curvature norm is also noted in
	\cite[Introduction]{LiXuZhou}. We include a self-contained verification
	and compute its critical energy in the normalization used here.
	The singular normal graphs over cones constructed by
	de Lira--Soret~\cite{LiraSoret} provide further scalar-flat examples
	in a different setting.
	
	\begin{table}[htbp]
		\centering\small
		\begin{tabular}{@{}p{0.28\textwidth}p{0.25\textwidth}p{0.37\textwidth}@{}}
			\toprule
			Changed assumption & Family & Curvature and energy \\
			\midrule
			Remove $R=0$ & Example~\ref{prop:minimal-test}
			& $H=0$, but $R<0$ \\[0.4em]
			Replace $R=0$ by small $\norm{R}_{L^{n/2}}$
			& Example~\ref{prop:approximate-test}
			& $\norm{H}_{L^n}=O(\varepsilon)$,
			$\norm{R}_{L^{n/2}}=O(\varepsilon^2)$ \\[0.4em]
			Replace smallness by finiteness
			& Example~\ref{prop:finite-test}
			& $R=0$, $\int|H|^n=E_n>0$ \\[0.4em]
		\end{tabular}
		\caption{Nonflat examples testing different hypotheses of the gap theorem.}
		\label{tab:obstructions}
	\end{table}
	
	\subsection{Removing the scalar-flat condition}
	
	\begin{example}[Failure after removing $R=0$]\label{prop:minimal-test}
		For every $n\geq2$, there is a connected, complete, properly embedded
		nonflat hypersurface in $\R^{n+1}$ with $H\equiv0$.
	\end{example}
	\begin{pf}[Verification]
		Take the product of the classical catenoid with $\R^{n-2}$:
		\[
		F(\theta,t,z)
		=(\cosh t\cos\theta,\cosh t\sin\theta,t,z),\qquad
		(\theta,t,z)\in\Sph^1\times\R\times\R^{n-2}.
		\]
		The induced metric is
		\[
		g=\cosh^2t\,(\dd\theta^2+\dd t^2)+|\dd z|^2.
		\]
		The metric dominates the complete product metric. The catenoid is
		embedded, and the $t$ and $z$ coordinates show properness. Its
		principal curvatures are $\operatorname{sech}^2t$,
		$-\operatorname{sech}^2t$, and $n-2$ zeros. Thus $H=0$, whereas
		\[
		R=-2\operatorname{sech}^4t<0.
		\]
		In particular, the mean curvature norm vanishes and the induced
		metric is not flat.
	\end{pf}
	
	By \eqref{eq:contracted-gauss}, a scalar-flat minimal immersion is
	totally geodesic in every codimension.
	
	\subsection{Small scalar curvature versus exact scalar flatness}
	
	\begin{example}[Small scalar curvature does not give exact flatness]
		\label{prop:approximate-test}
		For every $n\geq2$, there are complete, properly embedded hypersurface
		graphs $G_\varepsilon:\R^n\to\R^{n+1}$, $0<\varepsilon\leq1$, such that
		\[
		\norm{H_\varepsilon}_{L^n}
		+\norm{\mathrm{II}_\varepsilon}_{L^n}\leq C\varepsilon,
		\qquad \norm{R_{g_\varepsilon}}_{L^{n/2}}\leq C\varepsilon^2,
		\]
		but $\Rm_{g_\varepsilon}(0)\neq0$ for every such $\varepsilon$.
	\end{example}
	\begin{pf}[Verification]
		Choose $\varphi\in C_c^\infty(\R^n)$ equal to $|x|^2/2$ near
		the origin, and put $G_\varepsilon(x)=(x,\varepsilon\varphi(x))$.
		Every smooth entire Euclidean graph is complete, and this graph is
		proper since $|G_\varepsilon(x)|\geq|x|$. Its metric is uniformly
		comparable to the Euclidean metric for $0<\varepsilon\leq1$.
		The second fundamental form is supported in a fixed compact set and
		has norm at most $C\varepsilon$. Also $|H|\leq
		|\mathrm{II}|/\sqrt n$, and
		\eqref{eq:contracted-gauss} gives $|R|\leq(n+1)|\mathrm{II}|^2$.
		The asserted integral bounds follow. At the origin the gradient of
		the height vanishes and its Hessian is $\varepsilon I$, so
		\[
		K(e_i\wedge e_j)|_0=\varepsilon^2\quad(i\neq j),\qquad
		R(0)=n(n-1)\varepsilon^2>0.
		\]
		Thus the metric is not flat.
	\end{pf}
	
	Thus small $\norm{R}_{L^{n/2}}$ cannot replace $R=0$ in an exact
	flatness theorem. A quantitative question is discussed in
	Section~\ref{sec:questions}.
	
	\subsection{Finite energy versus an energy gap}
	For the classical rotational scalar-flat family recalled below,
	we compute the total mean curvature energy explicitly. This
	calculation shows that finite critical energy does not imply flatness.
	
	\begin{example}[Nonflat scalar-flat hypersurfaces with finite energy]
		\label{prop:finite-test}
		For every $n\geq3$ and $a>0$, there is a complete, properly embedded,
		nonflat scalar-flat hypersurface diffeomorphic to
		$\R\times\Sph^{n-1}$, whose total mean curvature integral is
		\begin{equation}\label{eq:rotational-energy}
			E_n:=\int|H|^n\dd v_g
			=\frac{2^{1-n}|\Sph^{n-1}|}{n-2}
			\mathrm B\left(\frac n2,\frac12\right)>0.
		\end{equation}
		Here $\mathrm B$ denotes the Euler beta function. In particular,
		the integral is independent of $a$, and $E_4=\pi^2/6$.
	\end{example}
	\begin{pf}[Verification]
		Let $r$ solve
		\[
		r''=\frac{n-2}{2}a^{n-2}r^{-(n-1)},\qquad
		r(0)=a,\quad r'(0)=0.
		\]
		Multiplication by $2r'$ and integration give
		\[
		(r')^2=1-(a/r)^{n-2}.
		\]
		The solution is even, is increasing for $s>0$, stays above $a$,
		and has $|r'|\leq1$. It therefore extends smoothly to all
		$s\in\R$. Its strict convexity implies $r(s)\to\infty$ as
		$|s|\to\infty$. Put
		\[
		z(s)=\int_0^s (a/r(u))^{(n-2)/2}\dd u,
		\qquad F(s,\omega)=(r(s)\omega,z(s)).
		\]
		Since $(r')^2+(z')^2=1$, the induced metric is
		$\dd s^2+r(s)^2g_{\Sph^{n-1}}$. It dominates a complete product
		metric and is complete. The function $z$ is strictly increasing,
		so $F$ is injective; $r(s)\to\infty$ and compactness of
		$\Sph^{n-1}$ give properness.
		
		With a consistent choice of normal, the tangential principal
		curvature is
		\[
		\lambda=\frac{z'}r=a^{(n-2)/2}r^{-n/2}
		\]
		with multiplicity $n-1$, and the radial principal curvature is
		$-r''/z'=-(n-2)\lambda/2$. Thus
		\[
		R=(n-1)\bigl(-(n-2)\lambda^2+(n-2)\lambda^2\bigr)=0,
		\qquad |H|=\frac{\lambda}{2}.
		\]
		A plane tangent to the sphere has sectional curvature $\lambda^2>0$,
		so the hypersurface is nonflat. Integrating over its two halves yields
		\[
		\int|H|^n\dd v_g
		=2^{1-n}|\Sph^{n-1}|a^{n(n-2)/2}
		\int_a^\infty
		\frac{r^{n-1-n^2/2}}{\sqrt{1-(a/r)^{n-2}}}\dd r.
		\]
		The substitution $u=(a/r)^{n-2}$ gives
		\[
		a^{n(n-2)/2}r^{n-1-n^2/2}\dd r
		=-\frac{1}{n-2}u^{n/2-1}\dd u.
		\]
		Hence the energy equals
		\[
		\frac{2^{1-n}|\Sph^{n-1}|}{n-2}
		\int_0^1u^{n/2-1}(1-u)^{-1/2}\dd u,
		\]
		which proves~\eqref{eq:rotational-energy} and explicitly shows
		cancellation of the scale $a$. This beta integral is finite because
		its powers at $u=0$ and $u=1$ are $n/2-1>-1$ and $-1/2>-1$.
	\end{pf}
	
	The critical energy is unchanged by dilation of the neck radius $a$.
	\section{Further questions}\label{sec:questions}

	For $n\geq3$, let $b_n^*$ be the supremum of all $b>0$ such
	that every complete connected scalar-flat Euclidean hypersurface
	with $\norm{H}_{L^n}<b$ is intrinsically flat. Theorem~\ref{h:thm:main}
	and Example~\ref{prop:finite-test} give
	\begin{equation}\label{eq:optimal-threshold}
		\beta_n\leq b_n^*\leq E_n^{1/n}.
	\end{equation}
	The refinement~\eqref{h:eq:improvedbeta} improves the lower bound
	to $\widehat\beta_n\leq b_n^*$. Does the rotational family realize
	the least critical energy among nonflat complete scalar-flat
	hypersurfaces? The nonsharp estimates used here do not identify an
	equality model.
	
	\section*{Acknowledgements}
	The author thanks Professor Xingwang Xu for bringing him to this
	problem several years ago and for sharing his ideas.
	\section*{Disclosure on AI assistance}
	The principal mathematical ideas were formulated by the author.
	AI-assisted tools were used to help check selected computations and
	improve the language of the manuscript. The author verified and
	completed all mathematical arguments and takes full responsibility
	for the content.

	\medskip
	\noindent{\small School of Mathematics, Yangzhou University, Yangzhou, China\\
		E-mail: \href{mailto:zhoujiuru@yzu.edu.cn}{zhoujiuru@yzu.edu.cn}}
	
\end{document}